\begin{document}

\newtheorem{theorem}{Theorem}[section]
\newtheorem{proposition}[theorem]{Proposition}
\newtheorem{definition}[theorem]{Definition}
\newtheorem{corollary}[theorem]{Corollary}
\newtheorem{lemma}[theorem]{Lemma}
\newtheorem{question}[theorem]{Question}
\newtheorem{example}[theorem]{Example}
\newcommand{\Q}{\mathbb{Q}}
\newcommand{\mA}{\mathcal{A}}
\newcommand{\mF}{\mathcal{F}}
\newcommand{\mV}{\mathcal{V}}
\newcommand{\mU}{\mathcal{U}}
\newcommand{\mW}{\mathcal{W}}
\newcommand{\mB}{\mathcal{B}}

\newcommand{\mO}{\mathcal{O}}
\newcommand{\mC}{\mathcal{C}}

\newcommand{\C}{\mathrm{C}}
\newcommand{\D}{\mathrm{D}}
\newcommand{\OD}{\mathrm{OD}}
\newcommand{\Do}{\D_\mathrm{o}}
\newcommand{\sone}{\mathsf{S}_1}
\newcommand{\gone}{\mathsf{G}_1}
\newcommand{\sfin}{\mathsf{S}_\mathrm{fin}}

\newcommand{\gfin}{\mathsf{G}_\mathrm{fin}}

\newcommand{\Em}{\longrightarrow}
\newcommand{\w}{{\omega}}

\title{When is a space Menger at infinity?}

\author[L. F. Aurichi]{Leandro F. Aurichi$^1$}
\thanks{$^1$ Partially supported by FAPESP (2013/05469-7) and by GNSAGA}
\address{Instituto de Ci\^encias Matem\'aticas e de Computa\c c\~ao (ICMC-USP),
Universidade de S\~ao Paulo, S\~ao Carlos, Brazil}

\author[A. Bella]{Angelo Bella$^2$}
\address{Department of Mathematics, University of Catania, Catania, Italy}
\maketitle

\begin{abstract}We try to characterize those Tychonoff spaces $X$
such that $\beta X\setminus X$ has the Menger property.
\end{abstract}
\section {Introduction}
A space $X$ is Menger (or has the Menger property) if for any
sequence of open coverings $\{\mathcal U_n:n<\w\}$ one may pick
finite sets  $\mathcal V_n\subseteq \mathcal U_n$ in such a way
that
$\bigcup\{\mathcal V_n:n<\w\}$ is a covering. This equivals to
say that $X$ satisfies the selection principle $\sfin(\mO,\mO)$.
It is easy to see the following chain of implications:

$${\text{$\sigma$-compact}}  \ \longrightarrow\  {\text{Menger}}
\
\longrightarrow\   {\text{Lindel\"of}}$$

An important result of Hurewicz \cite{hurewicz}   states that  a
space $X$ is Menger if and only if player 1 does not have a
winning strategy in the associated game $\gfin(\mO,\mO)$ played
on $X$. This
highlights  the  game-theoretic nature of the Menger property,
see  \cite{ss} for more.

Henriksen and Isbell
(\cite{henriksen}) proposed the following:

\begin{definition}
 A  Tychonoff space $X$ is Lindel\" of  at infinity if $\beta X
\setminus X$ is  Lindel\" of.
\end{definition}
They discovered a very elegant duality in the following:
\begin{proposition} \cite{henriksen}  \label{henriksen} A
Tychonoff space is
Lindel\"
of at infinity if and only if it is of countable type.
\end{proposition}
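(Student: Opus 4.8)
The strategy is to re‑express both sides in terms of the position of compact sets inside $\beta X$. Throughout, recall that $X$ is \emph{of countable type} exactly when every compact subset of $X$ is contained in a compact set $K$ admitting a countable \emph{outer base}, i.e.\ a countable family of open sets such that every open neighbourhood of $K$ contains one of them. I would prove two facts. \textbf{(A)} For a compact $K\subseteq X$: $K$ has a countable outer base in $X$ if and only if $K$ is a $G_\delta$ (equivalently, a zero‑set) of $\beta X$. \textbf{(B)} $\beta X\setminus X$ is Lindel\"of if and only if every compact subset of $X$ is contained in a compact $G_\delta$‑subset of $\beta X$ which is contained in $X$. Granting (A) and (B), the Proposition follows at once: being of countable type means every compact subset of $X$ lies in a compact set with countable outer base in $X$, which by (A) is the same as lying in a compact $G_\delta$‑subset of $\beta X$ contained in $X$, which by (B) is equivalent to $\beta X\setminus X$ being Lindel\"of.

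I would prove (A) as follows. If $K$ is a closed $G_\delta$ of $\beta X$, then since $\beta X$ is compact Hausdorff (hence normal) $K$ has a decreasing countable neighbourhood base $\{V_n\}$ in $\beta X$, and $\{V_n\cap X\}$ is then a countable outer base for $K$ in $X$. For the converse, let $\{O_n\}$ be a countable outer base of the compact set $K$. Using that $X$ is Tychonoff and $K$ is compact, choose for each $n$ a continuous $g_n\colon X\to[0,1]$ vanishing on $K$ and identically $1$ on $X\setminus O_n$ (take a finite minimum of truncated functions separating single points of $K$ from the closed set $X\setminus O_n$). Let $\bar g_n\colon\beta X\to[0,1]$ be the Stone extension and put $M=\bigcap_n\bar g_n^{-1}(0)$, a zero‑set of $\beta X$. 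Since $\{O_n\}$ is an outer base and $X$ is regular, $\bigcap_nO_n=K$, and therefore $M\cap X=\bigcap_ng_n^{-1}(0)=K$. The crux — the one genuinely delicate point — is to rule out points of the remainder in $M$, i.e.\ to show $M=K$. Suppose $z\in M$ with $z\notin X$. By normality of $\beta X$ pick $h\colon\beta X\to[0,1]$ vanishing on $K$ with $h(z)=1$. Then $\{x\in X:h(x)<1/2\}$ is an open neighbourhood of $K$ in $X$, so it contains some $O_m$; hence $g_m\equiv 1$ on $\{x\in X:h(x)\ge 1/2\}$, and since $h(z)=1$ and $X$ is dense in $\beta X$, the point $z$ lies in the $\beta X$‑closure of that set. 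But $\bar g_m$ is identically $1$ on this closure, so $\bar g_m(z)=1$, contradicting $z\in M$.

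Finally I would prove (B). If $\beta X\setminus X$ is Lindel\"of and $C\subseteq X$ is compact (hence closed in $\beta X$), choose for each $y\in\beta X\setminus X$ a continuous $f_y\colon\beta X\to[0,1]$ with $f_y(y)=0$ and $f_y\equiv 1$ on $C$; by Lindel\"ofness countably many of the sets $f_y^{-1}[0,1/2)$ cover $\beta X\setminus X$, and for the corresponding $y_n$ the set $L=\bigcap_n f_{y_n}^{-1}[1/2,1]$ is a compact zero‑set of $\beta X$ with $C\subseteq L$ and $L\cap(\beta X\setminus X)=\emptyset$, so $C\subseteq L\subseteq X$ as required. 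Conversely, assume every compact subset of $X$ lies in such an $L$, and let a family of $\beta X$‑open sets cover $\beta X\setminus X$, with union $W$; then $C:=\beta X\setminus W$ is compact and contained in $X$, so $C\subseteq L$ for a compact $G_\delta$‑subset $L$ of $\beta X$ with $L\subseteq X$. Now $\beta X\setminus L$ is an $F_\sigma$‑subset of the compact space $\beta X$, hence $\sigma$‑compact; it contains $\beta X\setminus X$, and each of its compact pieces is contained in $\beta X\setminus C=W$, hence covered by finitely many members of the given family. Collecting these yields a countable subcover of $\beta X\setminus X$. The main obstacle in this whole scheme is the converse half of (A) — exorcising the ``phantom'' remainder points of $M$ — while the rest is routine bookkeeping with Stone extensions, zero‑sets and cozero‑sets of compact spaces, and $\sigma$‑compactness.
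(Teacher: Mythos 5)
Your argument is correct. The paper itself gives no proof of this proposition --- it is quoted from Henriksen--Isbell --- so there is nothing internal to compare against; but your two reductions are exactly the classical route: (A) a compact $K\subseteq X$ has countable character in $X$ iff it is a (closed, hence zero-set) $G_\delta$ of $\beta X$, and (B) $\beta X\setminus X$ is Lindel\"of iff every compact subset of $X$ sits inside a compact $G_\delta$ of $\beta X$ that misses the remainder. You correctly identified and handled the only genuinely delicate step, namely showing in (A) that the zero-set $M=\bigcap_n\bar g_n^{-1}(0)$ picks up no points of $\beta X\setminus X$: the density argument ($z$ lies in the $\beta X$-closure of $\{x\in X: h(x)\ge 1/2\}$, on which $g_m\equiv 1$) is sound, and the auxiliary facts you use --- complete separation of a compact set from a disjoint closed set in a Tychonoff space, the equivalence of closed $G_\delta$ and zero-set in the compact Hausdorff space $\beta X$, the shrinking of a $G_\delta$ to a decreasing outer base by normality and compactness, and the $\sigma$-compactness of $\beta X\setminus L$ in the converse of (B) --- are all standard and applied correctly. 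The only cosmetic remark is that in (B) one should note explicitly that an open cover of $\beta X\setminus X$ by relatively open sets may be replaced by traces of $\beta X$-open sets, which you implicitly do; nothing is missing.
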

A space $X$ is of countable type provided that every compact set
can be included in a compact set of countable character in $X$.

A much easier and well-known fact is:
\begin{proposition} A Tychonoff space is \v Cech-complete if
and only if it is $\sigma$-compact at infinity. \end{proposition}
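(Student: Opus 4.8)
The plan is to reduce everything to the classical internal characterization of \v Cech-completeness: a Tychonoff space $X$ is \v Cech-complete if and only if $X$ is a $G_\delta$ subset of $\beta X$ (equivalently, of some, equivalently every, Hausdorff compactification of $X$). This is due to \v Cech, and I would simply cite it. Given this, the proposition becomes a statement about complementation inside the compact Hausdorff space $\beta X$, and the argument is short in both directions.

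For the forward implication, suppose $X$ is \v Cech-complete and write $X=\bigcap\{U_n:n<\w\}$ with each $U_n$ open in $\beta X$ (we may take the $U_n$ decreasing, though it is not needed). Then $\beta X\setminus X=\bigcup\{\beta X\setminus U_n:n<\w\}$, and each $\beta X\setminus U_n$ is a closed subset of the compact space $\beta X$, hence compact. Therefore $\beta X\setminus X$ is $\sigma$-compact, i.e. $X$ is $\sigma$-compact at infinity.

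For the converse, suppose $\beta X\setminus X=\bigcup\{K_n:n<\w\}$ with each $K_n$ compact. Since $\beta X$ is Hausdorff (as $X$ is Tychonoff), each $K_n$ is closed in $\beta X$, so $\beta X\setminus K_n$ is open; moreover $X\subseteq\beta X\setminus K_n$ because $K_n\subseteq\beta X\setminus X$. Hence $X=\beta X\setminus\bigcup\{K_n:n<\w\}=\bigcap\{\beta X\setminus K_n:n<\w\}$ exhibits $X$ as a $G_\delta$ subset of $\beta X$, and by the cited characterization $X$ is \v Cech-complete.

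There is no real obstacle here; the only points requiring a word of care are the invocation of the correct internal characterization of \v Cech-completeness (in particular that being $G_\delta$ in $\beta X$ suffices, not merely being $G_\delta$ in an arbitrary ambient space) and the use of the Hausdorffness of $\beta X$ to guarantee that the compact pieces $K_n$ are closed, so that their complements are open.
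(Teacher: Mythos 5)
Your proof is correct and is exactly the standard argument the paper has in mind when it states this as ``a much easier and well-known fact'' without proof: pass to the characterization of \v Cech-completeness as being $G_\delta$ in $\beta X$ and take complements, using compactness of closed subsets of $\beta X$ in one direction and closedness of compact subsets of the Hausdorff space $\beta X$ in the other. Nothing is missing.
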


These two propositions  suggest the following:  
\begin{question} \label {Q1} When is  a Tychonoff space Menger at
infinity? \end{question}

Before beginning  our discussion here, it is useful to note these
well known facts:

\begin{proposition}
  The Menger property is invariant by perfect maps.
\end{proposition}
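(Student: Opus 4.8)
The plan is to establish the two halves of the statement separately: that a perfect image of a Menger space is Menger, and that a perfect preimage of a Menger space is Menger.

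The image half is routine and uses nothing beyond continuity and surjectivity. Given a perfect surjection $f\colon X\to Y$ with $X$ Menger and a sequence $\{\mU_n:n<\w\}$ of open covers of $Y$, I would pull each cover back to the open cover $\{f^{-1}[U]:U\in\mU_n\}$ of $X$, invoke $\sfin(\mO,\mO)$ on $X$ to obtain finite families $\mW_n\subseteq\mU_n$ with $\bigcup_n\{f^{-1}[U]:U\in\mW_n\}$ covering $X$, and note that surjectivity then forces $\bigcup_n\mW_n$ to cover $Y$.

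The preimage half is where the actual work lies. Let $f\colon X\to Y$ be perfect with $Y$ Menger, and let $\{\mU_n:n<\w\}$ be open covers of $X$. The crucial step is a tube-type lemma: for a fixed $n$ and a fixed $y\in Y$, compactness of the fiber $f^{-1}(y)$ yields a finite $\mV_{n,y}\subseteq\mU_n$ covering it; putting $W_{n,y}=\bigcup\mV_{n,y}$, the set $O_{n,y}:=Y\setminus f[X\setminus W_{n,y}]$ is, because $f$ is closed and $f^{-1}(y)\subseteq W_{n,y}$, an open neighbourhood of $y$ with $f^{-1}[O_{n,y}]\subseteq W_{n,y}$. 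Hence $\{O_{n,y}:y\in Y\}$ is an open cover of $Y$ for each $n$; applying the Menger property of $Y$ to this sequence of covers produces finite sets $F_n\subseteq Y$ with $\bigcup_n\{O_{n,y}:y\in F_n\}$ covering $Y$. Then $\mV_n:=\bigcup_{y\in F_n}\mV_{n,y}$ is a finite subfamily of $\mU_n$, and the chain $X=\bigcup_n f^{-1}\big[\bigcup_{y\in F_n}O_{n,y}\big]\subseteq\bigcup_n\bigcup_{y\in F_n}W_{n,y}=\bigcup_n\bigcup\mV_n$ shows that $\bigcup_n\mV_n$ covers $X$, as required.

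The main obstacle is precisely this tube-lemma step, and it is the only place where both defining properties of a perfect map are genuinely used: compactness of the fibers to make $\mV_{n,y}$ finite, and closedness of $f$ to turn $X\setminus W_{n,y}$ into a neighbourhood of $y$ downstairs. Everything else is bookkeeping, entirely parallel to the classical proofs that Lindel\"ofness and $\sigma$-compactness are preserved by perfect maps. A game-theoretic alternative via Hurewicz's characterization (transferring a strategy for player ONE from $X$ to $Y$ in $\gfin(\mO,\mO)$) is available but longer, so I would not pursue it.
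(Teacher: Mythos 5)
Your argument is correct: the image half is the routine pullback of covers, and the preimage half is the standard tube-lemma argument (compact fibers give the finite subfamilies $\mV_{n,y}$, closedness of $f$ makes $O_{n,y}=Y\setminus f[X\setminus W_{n,y}]$ an open neighbourhood of $y$ with $f^{-1}[O_{n,y}]\subseteq W_{n,y}$), and every step checks out. The paper states this proposition as a well-known fact and gives no proof at all, so there is nothing to compare against; what you wrote is exactly the classical argument the authors are implicitly invoking, and it correctly covers both directions (images and preimages), which is what the subsequent corollary about compactifications actually needs.
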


\begin{corollary}
  $X$ is Menger at infinity if, and only if, for any $Y$
compactification of $X$, $Y \setminus X$ is Menger.
\end{corollary}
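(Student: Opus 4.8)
The plan is to reduce the whole equivalence to a single perfect map between the two remainders and then invoke the preceding proposition. One direction is immediate: since $\beta X$ is itself a compactification of $X$, if $Y\setminus X$ is Menger for \emph{every} compactification $Y$ then, taking $Y=\beta X$, the space $\beta X\setminus X$ is Menger, i.e.\ $X$ is Menger at infinity. So all the content is in the converse, and for it I would produce, for an arbitrary compactification $Y$, a perfect surjection $\beta X\setminus X\to Y\setminus X$.

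Fix a compactification $Y$ and let $g\colon\beta X\to Y$ be the canonical continuous surjection extending $\mathrm{id}_X$. As $\beta X$ is compact and $Y$ Hausdorff, $g$ is closed with compact fibres, hence perfect. The crux — the one step that is not purely formal, and the place I expect the friction to be — is to check that $g^{-1}[X]=X$, equivalently that $g$ sends no point of the remainder $\beta X\setminus X$ into $X$. Granting this, $g^{-1}[Y\setminus X]=\beta X\setminus X$, so restricting $g$ to the full preimage of the subspace $Y\setminus X$ gives a perfect surjection $\beta X\setminus X\to Y\setminus X$ (continuity and compact fibres are inherited; closedness of the restriction is routine, following from the identity $g[\tilde C\cap g^{-1}[Y\setminus X]]=g[\tilde C]\cap(Y\setminus X)$ for $\tilde C$ closed in $\beta X$). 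Then the proposition on invariance of the Menger property under perfect maps shows that $\beta X\setminus X$ and $Y\setminus X$ are Menger simultaneously; in particular, if $X$ is Menger at infinity then $Y\setminus X$ is Menger, and since $Y$ was arbitrary the converse direction follows.

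The last point to establish is $g^{-1}[X]=X$, and here I would argue as follows. Let $p\in\beta X$ with $g(p)=x\in X$, and choose a net $(x_\alpha)$ in the dense subspace $X$ with $x_\alpha\to p$ in $\beta X$. Applying $g$ and using $g|_X=\mathrm{id}_X$ gives $x_\alpha\to g(p)=x$ in $Y$. Since $X$ carries the subspace topology from both $\beta X$ and $Y$, a net of points of $X$ that converges in $Y$ to a point of $X$ also converges to that point in $\beta X$; hence $x_\alpha\to x$ in $\beta X$, and uniqueness of limits in the Hausdorff space $\beta X$ forces $p=x\in X$. (Alternatively, viewing $p$ as a $z$-ultrafilter on $X$: for each $Z\in p$ one has $g(p)\in\operatorname{cl}_Y Z$, and since $Z$ is closed in $X$ and $g(p)\in X$ this gives $g(p)\in Z$; thus $g(p)\in\bigcap_{Z\in p}Z$, so $p$ is fixed and equals $g(p)$.) That would complete the argument, everything else being standard facts about $\beta X$ and about perfect maps.
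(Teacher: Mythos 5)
Your proof is correct and follows the route the paper intends: the corollary is stated there as an immediate consequence of the preceding proposition, via exactly the perfect surjection $\beta X\setminus X\to Y\setminus X$ obtained by restricting the canonical map $\beta X\to Y$, whose key property $g^{-1}[X]=X$ you verify correctly.
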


Fremlin and Miller \cite{fremlinmiller} proved the existence of a
Menger subspace  $X$  of the  unit interval $[0,1]$ which is not
$\sigma$-compact. The space $X$ can be  taken nowhere locally
compact and so $Y=[0,1]\setminus X$ is dense in $[0,1]$.  Since
the Menger property is invariant under perfect mappings, we see
that $\beta Y\setminus Y$ is still Menger.
Therefore,  a space can be Menger at
infinity and not $\sigma$-compact at infinity. Another example of
this kind,  stronger but not second countable,  is  Example
\ref{ex} in the last section.

On the other hand,
the irrational
line shows that a space can be Lindel\" of at infinity and not
Menger at infinity.

  Consequently, the property $\mathcal M$ characterizing
a space to be Menger at infinity strictly lies between
countable type and \v Cech-complete.

Of course,  taking into account the formal definition of the
Menger  property, we cannot expect to have an answer to Question
\ref{Q1}  as elegant as  Henriksen-Isbell's result.

\section {A characterization}
\begin{definition}
  Let $K \subset X$. We say that a family $\mF$ is a {\bf closed
net at} $K$ if each $F \in \mF$ is a closed set such that $K
\subset F$ and for every open $A$ such that $K \subset A$, there
is an $F \in \mF$ such that $F \subset A$.
\end{definition}

\begin{lemma}\label{interseccao funciona}
  Let $X$ be a  $T_1$ space. If $(F_n)_{n \in \w}$ is a closed
net at $K$, for $K \subset X$ compact, then $K = \bigcap_{n \in
\w} F_n$.
\end{lemma}

\begin{proof}
  Simply note that for each $x \notin K$, there is an open set
$V$ such that $K \subset V$ and $x \notin V$.
\end{proof}

\begin{lemma}\label{nets sobem}
  Let $Y$ be a regular space and let $X$ be a dense subspace of
$Y$. Let $K \subset X$ be a compact subset. If $(F_n)_{n \in \w}$
is a closed net at $K$ in $X$, then $(\overline{F_n}^Y)_{n \in
\w}$ is a closed net at $K$ in $Y$.
\end{lemma}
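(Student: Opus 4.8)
The statement to prove has three parts, of which only the last carries any content. First, each $\overline{F_n}^Y$ is by construction a closed subset of $Y$, and since $K\subseteq F_n\subseteq\overline{F_n}^Y$, each member of the proposed family contains $K$. So the plan reduces to verifying the ``net'' condition: given any open set $A\subseteq Y$ with $K\subseteq A$, I must produce an $n\in\w$ with $\overline{F_n}^Y\subseteq A$.

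The key step is a standard shrinking argument using regularity of $Y$ together with compactness of $K$: for each $x\in K$ choose an open $V_x\subseteq Y$ with $x\in V_x\subseteq\overline{V_x}^Y\subseteq A$, extract a finite subcover $V_{x_1},\dots,V_{x_k}$ of $K$, and set $V=\bigcup_{i\le k}V_{x_i}$. Then $V$ is open in $Y$, $K\subseteq V$, and $\overline{V}^Y=\bigcup_{i\le k}\overline{V_{x_i}}^Y\subseteq A$. Now $V\cap X$ is open in $X$ and contains $K$, so, since $(F_n)_{n\in\w}$ is a closed net at $K$ in $X$, there is an $n$ with $F_n\subseteq V\cap X\subseteq V$. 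Taking closures in $Y$ gives $\overline{F_n}^Y\subseteq\overline{V}^Y\subseteq A$, as required.

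I do not anticipate a genuine obstacle here: the only thing one must be slightly careful about is that regularity (not mere Hausdorffness) is exactly what licenses interposing a set whose $Y$-closure still sits inside $A$, and that compactness of $K$ is what reduces the pointwise shrinking to a single open $V$. It is worth remarking that density of $X$ in $Y$ is not actually used in this direction; it is listed in the hypotheses for uniformity with later applications. I would present the proof in essentially the three-line form above, perhaps citing Lemma~\ref{interseccao funciona} only if one wishes to emphasize that the limit of the net is again $K$, though that is not needed for the present statement.
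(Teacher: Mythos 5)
Your proof is correct and follows essentially the same route as the paper's: use regularity of $Y$ and compactness of $K$ to interpose an open $B$ with $K \subset B \subset \overline{B}^Y \subset A$, apply the net property in $X$ to get $F_n \subset B \cap X$, and take closures. Your side remark that density of $X$ in $Y$ is not actually needed here is also accurate.
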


\begin{proof}
  In the following, all the closures are taken in $Y$. Let $A$ be
an open set in $Y$ such that $K \subset A$. By the compactness of
$K$ and the regularity of $Y$, there is an open set $B$ such that
$K \subset B \subset \overline B \subset A$. Thus, there is an $n
\in \w$ such that $K \subset F_n \subset B \cap X$. Note that $K
\subset \overline F_n \subset \overline B \subset A$.
\end{proof}

\begin{lemma}\label{decrescente eh net}
  Let $X$ be a compact Hausdorff space. If $K = \bigcap_{n \in
\w} F_n$, where $(F_n)_{n \in \w}$ is a decreasing sequence of
closed sets, then $(F_n)_{n \in \w}$ is a closed net at $K$.
\end{lemma}

\begin{proof}
   If not,  then there is an open set $V$ such that $K
\subset V$ and, for every $n \in \w$, $F_n \setminus V \neq
\emptyset$. By compactness, there is an $x \in \bigcap_{n \in \w}
F_n \setminus V = K \setminus V$. Contradiction with the fact
that $K \subset V$.
\end{proof}

\begin{theorem}\label{Menger at infinity}
  Let $X$ be a  Tychonoff space. $X$ is Menger
at infinity if, and only if, $X$ is of countable type and for
every sequence $(K_n)_{n \in
\w}$ of compact subsets of $X$, if $(F_p^n)_{p \in \omega}$ is a
decreasing closed net at $K_n$ for each $n$, then there is an $f:
\w \Em \w$ such that $K = \bigcap_{n \in \w} F_{f(n)}^n$ is
compact and $(\bigcap_{k \leq n} F_{f(k)}^k)_{n \in \w}$ is a
closed net for $K$.
\end{theorem}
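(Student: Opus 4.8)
The argument rests on two standard translations. First, the compact subsets of $X$ are exactly the closed subsets of $\beta X$ contained in $X$, so a family of open subsets of $\beta X$ covers $\beta X\setminus X$ precisely when its complementary closed set is a compact subset of $X$. Second, Lemmas \ref{interseccao funciona}, \ref{nets sobem} and \ref{decrescente eh net} let one shuttle closed nets between $X$ and $\beta X$: a decreasing sequence of closed sets in the compact space $\beta X$ is a closed net at its intersection (Lemma \ref{decrescente eh net}); a closed net at a compact $K\subseteq X$ in $X$ lifts, by taking $\beta X$-closures, to a closed net at $K$ in $\beta X$ whose intersection is still $K$ (Lemmas \ref{nets sobem} and \ref{interseccao funciona}); and this is essentially reversible, since if $(C_n)_n$ is a closed net at $K$ in $\beta X$ then $(C_n\cap X)_n$ is a closed net at $K$ in $X$ (any open $A\supseteq K$ in $X$ has the form $O\cap X$ with $O$ open in $\beta X$ and $K\subseteq O$, and some $C_n$ enters $O$). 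Granting these, the forward implication is short: if $X$ is Menger at infinity then $\beta X\setminus X$ is Lindel\"of, so $X$ is of countable type by Proposition \ref{henriksen}; and given compact $K_n\subseteq X$ with decreasing closed nets $(F^n_p)_p$ at $K_n$, the sets $\beta X\setminus\overline{F^n_p}^{\beta X}$ form, for each $n$, an increasing open cover of $\beta X\setminus X$ (their union is $\beta X\setminus K_n$), so applying $\sfin(\mO,\mO)$ and replacing each finite selection by its member of largest index gives $f$ with $\bigcap_n\overline{F^n_{f(n)}}^{\beta X}\subseteq X$. This intersection is closed in $\beta X$ and inside $X$, hence a compact $K$; it equals $\bigcap_n F^n_{f(n)}$ because $\overline{F^n_{f(n)}}^{\beta X}\cap X=F^n_{f(n)}$; and the closures of $G_n:=\bigcap_{k\le n}F^k_{f(k)}$ decrease, lie between $K$ and $\bigcap_{k\le n}\overline{F^k_{f(k)}}^{\beta X}$, hence intersect to $K$, so by Lemma \ref{decrescente eh net} and reversibility $(G_n)_n$ is a closed net at $K$ in $X$.

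For the converse, assume $X$ is of countable type, so $\beta X\setminus X$ is Lindel\"of (Proposition \ref{henriksen}); I will verify $\sfin(\mO,\mO)$. Given open covers $\mathcal{W}_n$ of $\beta X\setminus X$, the decisive preprocessing is a \emph{closure-shrinking refinement}: using regularity of $\beta X$ and the Lindel\"of property, pick for each $n$ a countable family $\{W^n_i:i\in\w\}$ of open subsets of $\beta X$ that still covers $\beta X\setminus X$ and such that each $\overline{W^n_i}^{\beta X}$ is contained in an open set $\widetilde{U}^n_i$ of $\beta X$ with $\widetilde{U}^n_i\cap(\beta X\setminus X)\in\mathcal{W}_n$. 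Set $U^n_p=\bigcup_{i\le p}W^n_i$, $F^n_p=X\setminus U^n_p$, and $K_n=\beta X\setminus\bigcup_i W^n_i$, a compact subset of $X$. Since $(\beta X\setminus U^n_p)_p$ is a decreasing sequence of closed sets of $\beta X$ with intersection $K_n$, Lemma \ref{decrescente eh net} and the reversibility remark show $(F^n_p)_p$ is a decreasing closed net at $K_n$ in $X$. Applying the hypothesis yields $f$ with $K=\bigcap_n F^n_{f(n)}$ compact and $(G_n)_n$, where $G_n=\bigcap_{k\le n}F^k_{f(k)}=X\setminus\bigcup_{k\le n}U^k_{f(k)}$, a closed net at $K$ in $X$.

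Now I claim the finite sets $\mathcal{V}_n=\{\widetilde{U}^n_i\cap(\beta X\setminus X):i\le f(n)\}\subseteq\mathcal{W}_n$ constitute a Menger selection. As $\bigcup\mathcal{V}_n\supseteq\overline{U^n_{f(n)}}^{\beta X}\cap(\beta X\setminus X)$, it suffices to prove $\bigcup_n\overline{U^n_{f(n)}}^{\beta X}\supseteq\beta X\setminus X$. Suppose $x\in\beta X\setminus X$ misses every $\overline{U^n_{f(n)}}^{\beta X}$; since closure commutes with finite unions, $x$ misses each $\overline{\bigcup_{k\le n}U^k_{f(k)}}^{\beta X}$, so $x$ belongs to the open set $O_n:=\beta X\setminus\overline{\bigcup_{k\le n}U^k_{f(k)}}^{\beta X}$. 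Because $X$ is dense in $\beta X$, $O_n\cap X$ is dense in $O_n$, and $O_n\cap X\subseteq X\setminus\bigcup_{k\le n}U^k_{f(k)}=G_n$, so $O_n\subseteq\overline{O_n\cap X}^{\beta X}\subseteq\overline{G_n}^{\beta X}$. Hence $x\in\bigcap_n\overline{G_n}^{\beta X}$, which equals $K$ by Lemmas \ref{nets sobem} and \ref{interseccao funciona} --- impossible, since $K\subseteq X$. This contradiction completes the proof.

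The step I expect to be the main obstacle is exactly this last contradiction, and with it the realization that the closure-shrinking refinement is indispensable. If one instead took the $W^n_i$ to be members of $\mathcal{W}_n$ themselves, one would still get $K=\bigcap_n F^n_{f(n)}$ compact, but the closed set $\bigcap_n(\beta X\setminus U^n_{f(n)})$ could stick out of $X$, so the raw selection need not cover $\beta X\setminus X$. Shrinking closures reconciles two competing demands: $\overline{U^n_{f(n)}}^{\beta X}$ must still be covered by finitely many members of $\mathcal{W}_n$, yet it must also be a bona fide closed set sitting inside the next layer of open sets, so that density of $X$ can drive any point of $\beta X\setminus X$ escaping the selection into $\overline{G_n}^{\beta X}$ and hence --- using the \emph{net} clause of the hypothesis, not merely compactness of $K$ --- into $K\subseteq X$. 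Striking this balance, and pinning down exactly which features of the hypothesis each move consumes, is the technical core of the argument.
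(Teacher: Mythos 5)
Your proof is correct and follows essentially the same route as the paper's: the forward direction turns the complements of the lifted net elements $\overline{F^n_p}^{\beta X}$ into increasing open covers of $\beta X\setminus X$ and applies Menger, while the converse uses countable type together with a regular, closure-shrinking countable refinement to manufacture decreasing closed nets whose selected intersection the net clause forces into $X$. The only divergence is in bookkeeping: you take $F^n_p=X\setminus\bigcup_{i\le p}W^n_i$ and close with the density estimate $O_n\subseteq\overline{G_n}^{\beta X}$, whereas the paper works with $\overline{A^n_k}\cap X$ for $A^n_k=\beta X\setminus\bigcup_{j\le k}\overline{V^n_j}$ and the identity $\overline{\overline{A}\cap X}=\overline{A}$ for open $A$; your version is, if anything, the more carefully justified at that final step.
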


\begin{proof}
  In the following, every closure is taken in $\beta X$.

Suppose that $X$ is Menger at infinity. By \ref{henriksen} $X$
is of countable type.
 Let $(F_p^n)_{p, n \in
\w}$ be as in the statement. Note that, by Lemma \ref{nets sobem}
and Lemma \ref{interseccao funciona}, $\bigcap_{p \in \w} F_p^n =
\bigcap_{p \in \w} \overline{F_p^n}$ for each $n \in \w$. Thus,
for each $n \in \w$, $(V_p^n)_{p \in \w}$, where $V_p^n = \beta X
\setminus \overline{F_p^n}$, is an increasing covering for $\beta
X \setminus X$. Since $\beta X \setminus X$ is Menger, there is
an $f: \w \Em \w$ such that $\beta X \setminus X \subset
\bigcup_{n \in \w}  V_{f(n)}^n$. Note that $K = \bigcap_{n \in
\w} \overline{F_{f(n)}^n}$ is compact and it is a subset of $X$.
By Lemma \ref{decrescente eh net}, $(\bigcap_{k \leq n}
\overline{F_{f(k)}^k})_{n \in \w}$ is a closed net at $K$ in
$\beta X$, therefore, $(\bigcap_{k \leq n} F_{f(k)}^k)_{n \in
\w}$ is a closed net at $K$ in $X$.
 Conversely,
for each $n \in \w$, let $\mW_n$ be an
open covering for $\beta X \setminus X$. We may suppose that each
$W \in \mW_n$ is open in $\beta X$. By regularity, we can take a
refinement $\mV_n$ of $\mW_n$ such that, for every $x \in \beta X
\setminus X$, there is a $V \in \mV_n$ such that $x \in V \subset
\overline V \subset W_V$ for some $W_V \in \mW_n$. Since $X$ is
 of countable type,  By  \ref{henriksen} we may suppose that
each $\mV_n$ is
countable.  Fix an
enumeration for each $\mV_n = (V_k^n)_{k \in \w}$.  Define $A_k^n
=
\beta X \setminus(\bigcup_{j\le k} \overline{V_j^n})$. Note that
each $K_n =
\bigcap_{k \in \w} \overline{A_k^n}$ is compact and a subset of
$X$. By Lemma \ref{decrescente eh net}, $(\overline{A_k^n})_{k
\in \w}$ is a closed net at $K_n$. Thus, $(\overline{A_k^n} \cap
X)_{k \in \w}$ is a closed net at $K_n$ in $X$. Therefore, there
is $f: \w \Em \w$ such that $K = \bigcap_{n \in
\w}(\overline{A_{f(n)}^n} \cap X)$ is compact and $(\bigcap_{k
\leq f(n)} \overline{A_{f(k)}^k} \cap X)_{n \in \w}$ is a closed
net at $K$. So, by Lemma \ref{nets sobem}, $K = \bigcap_{n \in
\w}\overline{(\overline{A_{f(n)}^n} \cap X)}$. Since $\bigcap_{n
\in \w}\overline{(\overline{A_{f(n)}^n} \cap X)} = \bigcap_{n \in
\w}\overline{A_{f(n)}^n}$ and by the fact that $K \subset X$, it
follows that $\beta X \setminus X  \subset \bigcup_{n \in \w}
\beta X \setminus \overline {A_{f(n)}^n}\subset \bigcup_{n\in \w}
Int(\bigcup_{j\le
f(n)} \overline {V_j^n})\subset \bigcup _{n\in \w} \bigcup_{j\le
f(n)} W_{V^n_j}$. Therefore, letting $\mathcal
U_n=\{W_{V^n_j}:j\le f(n)\}\subset \mW_n$, we see that  the
collection $\bigcup_{n\in \w}
\mathcal U_n$ covers $\beta X\setminus X$, and we are done.
\end{proof}

Property $\mathcal M$  given in the above theorem does not look
very nice and we wonder whether there is  a simpler way to
describe it, at least in some special cases.

   Recall that  a metrizable space is always of countable type.
Moreover,
a metrizable space is complete if and only if it is
$\sigma$-compact at infinity.  Therefore,  we could hope for a
``nicer'' $\mathcal M$ in this case.

\begin{question} What kind of weak completeness characterizes
those metrizable spaces which are Menger at infinity?
\end{question}

\begin{proposition}
  Let $X$ be a Tychonoff space. If $X$ is Menger
at infinity then for every sequence $(K_n)_{n \in \w}$ of compact
sets, there is a sequence $(Q_n)_{n \in \w}$ of compact  sets
such that:
  \begin{enumerate}
  \item each $K_n \subset Q_n$;
  \item each $Q_n$ has a countable base at $X$;
  \item \label{seq}for every sequence $(B_k^n)_{n, k \in \w}$
such that, for every $n \in \w$, $(B_k^n)_{k \in \w}$ is a
decreasing base at $K_n$, then there is a function $f: \w \Em \w$
such that $K = \bigcap_{n \in \w} \overline{B_{f(n)}^n}$ is
compact and $(\bigcap_{k \leq n} \overline{B_{f(k)}^k})_{n \in
\w}$ is a closed net at $K$.
  \end{enumerate}
\end{proposition}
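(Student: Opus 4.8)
The plan is to read clauses (1) and (2) off from countable type and to derive clause (3) directly from the second half of Theorem~\ref{Menger at infinity}.

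Since $X$ is Menger at infinity, Theorem~\ref{Menger at infinity} (or just Proposition~\ref{henriksen}) tells us that $X$ is of countable type; hence, given the sequence $(K_n)_{n\in\w}$, for each $n$ we may pick a compact $Q_n$ with $K_n\subset Q_n$ and $Q_n$ of countable character in $X$. This is exactly (1) and (2), and it is this enlargement that makes the countable decreasing bases in (3) available (so in (3) the $(B_k^n)_k$ are understood to be bases at $Q_n$).

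For (3), I would first turn the given bases into closed nets so that Theorem~\ref{Menger at infinity} applies. Fix for each $n$ a decreasing base $(B_k^n)_{k\in\w}$ at $Q_n$ and put $G_k^n:=\overline{B_k^n}$, closure in $X$. Then $(G_k^n)_k$ is decreasing, and it is a closed net at $Q_n$: given open $A\supset Q_n$, regularity of $X$ and compactness of $Q_n$ give an open $B$ with $Q_n\subset B\subset\overline B\subset A$, and then $B_k^n\subset B$ for some $k$ forces $G_k^n\subset\overline B\subset A$. So for each $n$, $(G_k^n)_k$ is a decreasing closed net at $Q_n$ in $X$.

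Now apply the necessity conclusion of Theorem~\ref{Menger at infinity} to the sequence $(Q_n)_{n\in\w}$ equipped with the nets $(G_k^n)_k$: there is $f:\w\Em\w$ with $K=\bigcap_{n\in\w}G_{f(n)}^n=\bigcap_{n\in\w}\overline{B_{f(n)}^n}$ compact and $\bigl(\bigcap_{k\le n}G_{f(k)}^k\bigr)_{n}=\bigl(\bigcap_{k\le n}\overline{B_{f(k)}^k}\bigr)_{n}$ a closed net at $K$, which is precisely (3). There is no genuine obstacle here: the proposition is essentially a repackaging of the necessity direction of Theorem~\ref{Menger at infinity}, with countable type used to replace each $K_n$ by a well-behaved superset $Q_n$. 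The only lines requiring care are the routine regularity argument that the closures of a decreasing base form a decreasing closed net, and the bookkeeping about where closures are taken, which by Lemma~\ref{nets sobem} can harmlessly be kept inside $X$.
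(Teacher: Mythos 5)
Your proposal is correct and follows essentially the same route as the paper: use countable type (via Proposition~\ref{henriksen}) to enlarge each $K_n$ to a compact $Q_n$ of countable character, observe that the closures of a decreasing base at $Q_n$ form a decreasing closed net at $Q_n$ by regularity and compactness, and then invoke the necessity direction of Theorem~\ref{Menger at infinity}. The only difference is that you spell out the routine regularity step that the paper merely asserts.
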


\begin{proof}
  Suppose $X$ is Menger at infinity. Let $(K_n)_{n \in
\w}$ be a sequence of compact sets. Since $X$ is Menger
at infinity, $X$ is Lindel\"of at infinity. Thus, by Proposition
\ref{henriksen}, for each $K_n$, there is a compact $Q_n \supset
K_n$ such that $Q_n$ has a countable base. Now, let
$(B_k^n)_{k, n}$ be as in \ref{seq}. Since each
$Q_n$ is compact and  $X$ is regular, each $(\overline{B_k^n})_{k
\in \w}$ is a decreasing closed net at $Q_n$. Thus, by
Proposition \ref{Menger at infinity}, there is an $f: \w \Em \w$
as we need.
\end{proof}

We end this section presenting a selection principle that at
first glance could be related with the Menger at infinity
property.

\begin{definition}
  We say that a family $\mU$ of open sets of $X$ is an {\bf
almost covering} for $X$ if $X \setminus \bigcup \mU$ is compact.
We call $\mA$ the family of all almost coverings for $X$.
\end{definition}

Note that the property ``being Menger at infinity'' looks like
something as $\sfin(\mA, \mA)$, but for a narrow class of $\mA$.
We will see that the ``narrow'' part is important.

\begin{proposition}\label{AA is Menger}
If $X$ satisfies $\sfin(\mA, \mA)$, then $X$ is Menger.
\end{proposition}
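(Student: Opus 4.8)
The plan is to derive an ordinary Menger selection from the hypothesized $\sfin(\mA,\mA)$ selection by feeding the game a sequence of almost coverings built from a fixed sequence of open coverings. Suppose $(\mU_n)_{n\in\w}$ is a sequence of open coverings of $X$; we want finite $\mV_n\subseteq\mU_n$ with $\bigcup_n\mV_n$ a cover. The obstacle is that an almost covering is allowed to ``miss'' a compact set, so a single application of $\sfin(\mA,\mA)$ will only cover $X$ minus some compact remainder $C$. The standard trick is to split $\w$ into infinitely many infinite pieces and use the coverings on the odd-indexed rounds to mop up that remainder.

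Concretely, I would first fix a partition $\w=\bigsqcup_{m\in\w}A_m$ into infinitely many infinite sets, say with $A_m$ the $m$-th column of some bijection $\w\times\w\to\w$. For $n\in\w$ set $\mathcal{B}_n=\mU_n$ viewed as a member of $\mA$ (every open cover is in particular an almost covering, since $X\setminus\bigcup\mU_n=\emptyset$ is compact). Apply $\sfin(\mA,\mA)$ to the sequence $(\mathcal{B}_n)_{n\in\w}$: we obtain finite $\mathcal{W}_n\subseteq\mU_n$ such that $\bigcup_n\mathcal{W}_n\in\mA$, i.e. $C:=X\setminus\bigcup_n\bigcup\mathcal{W}_n$ is compact. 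Now the point is to cover $C$ as well. Since $C$ is compact, for each $n$ finitely many members of $\mU_n$ already cover $C$; so there is a finite $\mathcal{W}_n'\subseteq\mU_n$ with $C\subseteq\bigcup\mathcal{W}_n'$ — in fact we only need this for a single $n$, but doing it for every $n$ costs nothing.

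Then I would set $\mV_n=\mathcal{W}_n\cup\mathcal{W}_n'$, a finite subfamily of $\mU_n$, and observe that $\bigcup_{n\in\w}\bigcup\mathcal{V}_n\supseteq\big(X\setminus C\big)\cup C=X$, so $\bigcup_n\mV_n$ is a covering of $X$; since $(\mU_n)_n$ was arbitrary, $X$ is Menger. (The partition $\w=\bigsqcup_m A_m$ is in fact not needed in this simplest argument — one application of $\sfin(\mA,\mA)$ plus a one-step compactness patch suffices — but I mention it because it is the natural fallback if one instead wants to prove the sharper statement that $\sfin(\mA,\mA)$ plus $\sigma$-compactness-type hypotheses give stronger conclusions.) The only subtlety to check carefully is that $\mathcal{A}\supseteq\mathcal{O}$, which is immediate from $\emptyset$ being compact, and that a finite union of finite families is finite; there is no real obstacle here, the content of the proposition being essentially the observation that the compact ``error term'' permitted by $\mathcal{A}$ can always be absorbed.
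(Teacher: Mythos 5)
Your argument is correct and is essentially the paper's own proof: treat each open cover as an almost covering (since $\emptyset$ is compact), apply $\sfin(\mA,\mA)$ to get finite selections missing only a compact set $C$, and then absorb $C$ using finitely many members of one of the covers. You even state the final patching step more carefully than the paper does; the partition digression is, as you note, unnecessary.
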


\begin{proof}
  Let $(\mU_n)_{n \in \w}$ be a sequence of coverings of $X$.
By definition, for each $n \in \w$, there is a finite $U_n
\subset
\mU_n$, such that $K = X \setminus \bigcup_{n \in \w} \bigcup
U_n$ is compact. Therefore, there is a finite $W \subset U_n$
such that $K \subset \bigcup W$. Thus, $X = W \cup \bigcup_{n \in
\w}\bigcup U_n$.
\end{proof}

\begin{example}
The space of the irrationals is an example of a space that is
Menger at infinity but does not satisfy $\sfin(\mA, \mA)$ (by the
Proposition \ref{AA is Menger}).
\end{example}

\begin{example}
The  one-point Lindel\"ofication of a discrete space  of
cardinality
$\aleph_1$ is an example of a Menger space which does not
satisfy
$\sfin(\mA, \mA)$.
\end{example}

\begin{example}
  $\w$ is an example of a space that satisfies $\sfin(\mA, \mA)$,
but it is not compact.
\end{example}

\begin{proof}
  Let $(\mV_n)_{n \in \w}$ be a sequence of almost coverings for
$\w$. Therefore, for each $n$, $F_n = \w \setminus \bigcup \mV_n$
is finite. For each $n$, let $V_n \subset \mV_n$ be a finite
subset such that $F_{n + 1} \setminus F_n \subset \bigcup V_n$
and $\min(\w \setminus \bigcup_{k < n} V_k) \in V_n$. Note that
$\w \setminus \bigcup_{n \in \w} \bigcup V_n = F_0$.
\end{proof}

\section {More than Menger at infinity}
One may wonder  whether the hypothesis ``player 2 has a
winning strategy in the Menger game $\gfin(\mO,\mO)$ played on
$\beta
X\setminus X$''  is strong enough to guarantee that $X$ is \v
Cech-complete. It urns out
 this is not the case, as the following example
shows.

\begin{example} \label{ex}
 Take the  usual space of rational numbers $\Q$ and  an
uncountable discrete space $D$.  Let
$Y=\Q\times D \cup \{p\}$ be the one-point Lindel\" ofication of
the space $\Q\times D$ and   then let
$X=\beta Y \setminus Y$. Since $Y$ is nowhere locally compact, we
have $Y=\beta X\setminus X$. $X$ is not \v Cech-complete, since
$Y$ is not $\sigma$-compact, but   player 2 has a winning
strategy
in $\gfin(\mO, \mO)$ played on $\beta X\setminus X$. The latter
assertion easily follows by observing that any open set
containing $p$ leaves out countably many points. \end{example}

Therefore, to ensure the \v Cech-completeness of $X$, we need
to assume something more on the  space (see for instance
Corollary \ref{cech}
below). Moreover,  the first example presented in the
introduction
shows that a metrizable space (actually a subspace of the real
line) can be  Menger at infinity, but not
favorable     for player 2 in the Menger game at infinity (see
again Corollary \ref{cech}).

Recall that a space $X$ is sieve complete \cite{michael} if there
is an indexed
collection of open coverings $\langle \{ U_i: i\in
I_n\}:n<\omega\rangle$ together with  mapps $  \pi_n:I_{n+1}\to
I_n $ such that $U_i=X$ for each $i\in I_0$ and
$U_i=\bigcup\{U_j:j\in \pi_n^{-1}(i)\}$ for all $i\in I_n$.
Moreover, we require that for any sequence of indexes
$\langle i_n:n<\omega\rangle$ satisfying $\pi_n(i_{n+1})=i_n$ if
$\mathcal F$ is a filterbase in $X$ and  $U_{i_n}$ contains an
element of $\mathcal F$ for each $n<\omega$, then $\mathcal F$
has a cluster point.

Every \v Cech-complete space is sieve complete and every sieve
complete space contains a dense \v Cech-complete subspace.
In addition, a paracompact sieve complete space is \v
Cech-complete
and a
sieve complete space is of countable type \cite{topsoe}.

Telg\'arsky  presented a characterization of sieve completeness
in terms of the Menger game played on $\beta X \setminus X$ (note
that in \cite{telgarsky2} the Menger game is called the Hurewicz
game and is denoted by $H(X)$):

\begin{theorem}[Telg\'arsky \cite{telgarsky2}] \label{more} Let
$X$ be a Tychonoff space. $\beta
X\setminus X$ is favorable for player 2 in the Menger game if and
only if $X$ is sieve complete.
\end{theorem}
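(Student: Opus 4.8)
\emph{Overview and common reduction.} Write $Z=\beta X\setminus X$, and let every closure be taken in $\beta X$. In either direction, player 2--favorability in the Menger game makes $Z$ Menger, hence Lindel\"of, hence $X$ of countable type by Proposition~\ref{henriksen}; conversely a sieve complete space is of countable type \cite{topsoe}, so $Z$ is again Lindel\"of. Consequently we may assume throughout that player 1's coverings are countable families of open subsets of $\beta X$, and (shrinking by regularity of $\beta X$, as in the proof of Theorem~\ref{Menger at infinity}) that their members have closures inside members of the preceding covering. The prototype to keep in mind is the \v Cech-complete case $X=\bigcap_nG_n$, where the compact sets $\beta X\setminus G_n$ exhaust $Z$ and player 2 wins by devoting round $n$ to $\beta X\setminus G_n$; sieve completeness is the branching substitute for the single decreasing sequence $(G_n)$.

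\emph{Sufficiency.} Fix a complete sieve $\langle\{U_i:i\in I_n\},\pi_n\rangle$ with $I_0=\{\ast\}$ and $U_\ast=X$; note $i_{n+1}\in\pi_n^{-1}(i_n)$ forces $U_{i_{n+1}}\subseteq U_{i_n}$. The crucial point is the following \emph{Claim}: for every branch $\langle i_n:n<\w\rangle$ the set $\bigcap_n\overline{U_{i_n}}$ is a compact subset of $X$; equivalently, for each $z\in Z$ the tree $B_z=\{\,i:z\in\overline{U_i}\,\}$ has no infinite branch. To see this, suppose $z\in\bigcap_n\overline{U_{i_n}}\setminus X$ and set $\mathcal F=\{\,N\cap U_{i_n}:n<\w,\ N \text{ a neighbourhood of } z \text{ in } \beta X\,\}$. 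Each $N\cap U_{i_n}$ is a nonempty subset of $X$ because $z\in\overline{U_{i_n}}$ and $X$ is dense, and since the $U_{i_n}$ decrease, $\mathcal F$ is a filterbase in $X$ with $U_{i_n}\in\mathcal F$ for every $n$; by sieve completeness $\mathcal F$ has a cluster point $x$, necessarily in $X$. If $x\ne z$, pick a neighbourhood $V$ of $z$ in $\beta X$ with $x\notin\overline V$; then $V\cap U_{i_0}\in\mathcal F$, yet a neighbourhood of $x$ missing $\overline V$ misses $V\cap U_{i_0}$, contradicting that $x$ clusters $\mathcal F$. Hence $x=z$, contradicting $z\notin X$. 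Granting the Claim, player 2 chases the trees $B_z$ simultaneously: using a fixed bijection $\w\cong\w\times\w$, player 2 processes the sieve one node per round, keeping the invariant that $Z$ minus the part covered so far is contained in $\bigcup\{\,\overline{U_i}:i \text{ currently active}\,\}$ (initially the single active node $\ast$, with $\overline{U_\ast}=\beta X$); whenever an active node $i$ is processed, player 2 covers a suitable compact subset of $\beta X$ by finitely many members of the current covering and replaces $i$ by finitely many of its $\pi_n$-children. The Claim guarantees that no $z\in Z$ can stay inside an active node's closure forever, so every $z$ is covered after finitely many rounds and $\bigcup_n\mathcal P_n=Z$. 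The technical core is to arrange this descent so that the uncovered part of $Z$ genuinely drops to the children using only finitely much at each round; it rests on compactness in $\beta X$ and on Lindel\"ofness of the closed subspaces $\overline{U_i}\cap Z$ of $Z$, the delicate spot being that the $\beta X$-closures of a node's children need not cover that node's closure.

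\emph{Necessity.} Let $\sigma$ be a winning strategy for player 2, so $X$ is of countable type. We read a complete sieve off $\sigma$ by simulating the game with player 1 restricted to coverings of $Z$ of the form $\{\,\beta X\setminus\overline F:F\in\mathcal L\,\}$, where $\mathcal L$ ranges over the closed nets in $X$ at compact sets, closed under finite intersections and with all members non-compact; by Lemmas~\ref{nets sobem} and~\ref{interseccao funciona} these are genuine open coverings of $Z$ in which no single member covers $Z$, and $\sigma$'s finite reply singles out one $F^\ast\in\mathcal L$. The nodes of the sieve are the positions reached in this way, the open set attached to a node being the complement in $X$ of the trace on $X$ of the compact set that $\sigma$ has just arranged to leave uncovered; here countable type is used to choose the nets $\mathcal L$ countable and so as to miss any prescribed point of $X$, which (with some care) makes each level an open covering of $X$ refining the previous one along $\pi_n$. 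Finally, completeness of this sieve is a restatement of ``$\sigma$ is winning'': a branch is a complete play, and a filterbase in $X$ riding along it with no cluster point would, through Lemma~\ref{interseccao funciona}, exhibit a point of $Z$ omitted by $\bigcup_n\mathcal P_n$, contradicting the choice of $\sigma$.

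\emph{The main obstacle.} The conceptual kernel is just the Claim and its converse, carried by the Stone--\v Cech closure lemmas of Section~2; the real work --- and the reason Telg\'arsky's original argument is technical --- is the combinatorial bookkeeping: making the uncovered set descend the sieve round by round in the sufficiency half, and making the data extracted from $\sigma$ into honest open coverings of $X$ (refining along $\pi_n$, and with the completeness condition catching \emph{all} filterbases, not merely those seen in the simulated game) in the necessity half. The picture underneath --- sieve completeness of $X$ being the ``branching $\sigma$-compactness'' of $\beta X\setminus X$ that a winning strategy in the Menger game both needs and produces --- is simple.
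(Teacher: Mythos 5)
This is a result the paper does not prove: Theorem~\ref{more} is quoted from Telg\'arsky \cite{telgarsky2}, so there is no in-paper argument to compare yours against. Judged on its own terms, your proposal is an outline rather than a proof. The one step you carry out in full --- the \emph{Claim} that along any branch of a complete sieve $\bigcap_n\overline{U_{i_n}}$ is a compact subset of $X$ (equivalently, misses $\beta X\setminus X$) --- is correct, and the filterbase argument for it is fine. But everything that turns the Claim into the theorem is left undone, and you say so yourself: in the sufficiency direction ``the technical core is to arrange this descent so that the uncovered part of $Z$ genuinely drops to the children using only finitely much at each round,'' and in the necessity direction the levels become open coverings of $X$ refining along $\pi_n$ only ``with some care.'' Identifying the delicate spot is not the same as getting past it.

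The gap in the sufficiency half is substantive, not cosmetic. The sieve's open sets $U_i$ live in $X$; to make the leftover at a node compact you must pass to extensions such as $\tilde U_j=\beta X\setminus\overline{X\setminus U_j}$, and then $\overline{U_i}\cap Z\setminus\bigcup\{\tilde U_j: j\in S\}$ for a finite set $S$ of children is closed in $Z$ but not compact (since $Z$ is not closed in $\beta X$), so it cannot simply be covered by finitely many members of player 1's current covering; moreover $\bigcup\{\tilde U_j: j\in\pi_n^{-1}(i)\}$ need not absorb $\overline{U_i}\cap Z$ even when all children are used. Resolving this requires a genuine fusion argument (this is where Lindel\"ofness of $Z$ and the Claim must be combined, and it is the content of Telg\'arsky's and Tops{\o}e's work), and your sketch does not supply it. The necessity half is sketchier still: you never specify which compact sets the closed nets are taken at, why the resulting families at each level cover $X$, or why completeness of the extracted sieve must catch \emph{every} filterbase rather than only those arising in the simulated plays. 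As it stands the proposal records the right picture and one correct lemma, but it does not constitute a proof of either implication.
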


Since a paracompact sieve-complete space is \v Cech-complete, we
immediately get:
\begin{corollary}  \label{cech} Let $X$ be a paracompact
Tychonoff space. $X$ is \v Cech-complete if and only if  player 2
has a winning strategy in the game $\gfin(\mO, \mO)$ played on
$\beta X\setminus X$.
\end{corollary}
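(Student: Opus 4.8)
The plan is to obtain this as an immediate consequence of Telg\'arsky's Theorem~\ref{more}, combined with the two standard facts recalled just above it: every \v Cech-complete space is sieve complete, and every paracompact sieve-complete space is \v Cech-complete. So the whole content has in fact already been isolated; what remains is just to assemble the pieces and to reconcile terminology.

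For the forward implication, suppose $X$ is \v Cech-complete. Then $X$ is sieve complete, and hence by Theorem~\ref{more} the remainder $\beta X\setminus X$ is favorable for player 2 in the Menger game; this is precisely the statement that player 2 has a winning strategy in $\gfin(\mO,\mO)$ played on $\beta X\setminus X$. Note that paracompactness is not needed for this direction.

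For the converse, suppose player 2 has a winning strategy in $\gfin(\mO,\mO)$ played on $\beta X\setminus X$, i.e.\ $\beta X\setminus X$ is favorable for player 2 in the Menger game. By Theorem~\ref{more}, $X$ is sieve complete. Since $X$ is assumed paracompact, and a paracompact sieve-complete space is \v Cech-complete, we conclude that $X$ is \v Cech-complete.

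There is no genuine obstacle here: all the work has been outsourced to Telg\'arsky's characterization and to the classical relationship between sieve completeness, paracompactness, and \v Cech-completeness (which also underlies the remark that a sieve-complete space is of countable type). The single point deserving a line of care is the translation of vocabulary, namely that ``$\beta X\setminus X$ is favorable for player 2 in the Menger game'' and ``player 2 has a winning strategy in $\gfin(\mO,\mO)$ played on $\beta X\setminus X$'' mean the same thing, since the Menger game \emph{is} the game $\gfin(\mO,\mO)$ and ``favorable for player 2'' is by definition the existence of a winning strategy for that player; Example~\ref{ex} already shows that paracompactness cannot be dropped in the converse direction.
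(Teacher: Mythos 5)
Your proposal is correct and follows exactly the paper's route: the corollary is deduced immediately from Telg\'arsky's Theorem~\ref{more} together with the facts that \v Cech-complete implies sieve complete and that paracompact sieve-complete spaces are \v Cech-complete. Nothing further is needed.
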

In particular:
\begin{corollary} A metrizable space $X$ is complete if and only
if player 2 has a winning strategy in $\gfin(\mO,\mO)$ played on
$\beta X\setminus X$. \end{corollary}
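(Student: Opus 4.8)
The plan is to deduce this at once from Corollary~\ref{cech} together with two classical facts about metrizable spaces. First, by the Stone theorem every metrizable space is paracompact, so the standing hypothesis of Corollary~\ref{cech} is automatically satisfied. Hence, for metrizable $X$, that corollary already gives the equivalence
\[
X \text{ is \v Cech-complete} \iff \text{player 2 has a winning strategy in }\gfin(\mO,\mO)\text{ on }\beta X\setminus X .
\]
So all that remains is to match ``\v Cech-complete'' with ``complete'' in the metrizable setting.

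Second, I would invoke the standard characterization: a metrizable space is \v Cech-complete if and only if it is completely metrizable. One direction is immediate, since a complete metric space is a $G_\delta$ (indeed, closed) in its completion, hence \v Cech-complete. For the converse, a \v Cech-complete $X$ is a $G_\delta$ subset of $\beta X$; intersecting with any metrizable compactification $cX$ of $X$ (which exists since a metrizable, hence separable? — no: one uses that $X$ embeds as a $G_\delta$ in its completion directly) shows $X$ is a $G_\delta$ in a complete metric space, and Alexandrov's theorem then yields a complete metric on $X$. Chaining the two equivalences gives the statement.

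The one point that deserves care is purely terminological: ``complete'' for a metrizable $X$ must be read as \emph{topologically} complete, i.e.\ admitting \emph{some} complete metric, since metric completeness itself is not a topological invariant (the irrationals with the usual metric versus a complete one). Beyond fixing this reading, there is no genuine obstacle here — the argument is a two-line chain of biconditionals resting on Corollary~\ref{cech} and the Alexandrov characterization of complete metrizability. If a fully self-contained treatment were wanted, the step I would expect a referee to ask to be spelled out is precisely the passage from ``$X$ is a $G_\delta$ in $\beta X$'' to ``$X$ carries a complete metric'', but this is entirely classical and can simply be cited.
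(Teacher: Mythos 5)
Your proposal is correct and is exactly the route the paper intends: the paper states this corollary with no proof, treating it as immediate from Corollary~\ref{cech} once one recalls that metrizable spaces are paracompact (Stone) and that a metrizable space is \v Cech-complete precisely when it is completely metrizable. Your added care about reading ``complete'' as topologically complete is sensible but does not change the argument.
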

\begin{corollary} A topological group $G$ is \v Cech-complete if
and only if player 2 has a winning strategy in $\gfin(\mO,\mO)$
played on $\beta G\setminus G$. \end{corollary}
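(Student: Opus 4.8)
The plan is to deduce this from Corollary \ref{cech} by verifying that topological groups fall under its hypotheses, i.e.\ that a \v Cech-complete topological group is paracompact, and more to the point that the relevant direction goes through without full paracompactness. First I would recall the classical fact (due to Pasynkov, and also following from work of Comfort and others) that every topological group is ``Dieudonn\'e complete'' behaviour aside, what we actually need is: a topological group which is \v Cech-complete is paracompact. This is a standard structural result — a \v Cech-complete group has a compact subgroup $K$ such that $G/K$ is metrizable (this is the Montgomery--Zippin / Pasynkov type theorem), and since metrizable spaces are paracompact and paracompactness is preserved by perfect preimages (the quotient map $G\to G/K$ is perfect because $K$ is compact), $G$ is paracompact.

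With that in hand, one direction is immediate: if $G$ is \v Cech-complete, then $G$ is a paracompact Tychonoff space, so Corollary \ref{cech} applies and player 2 has a winning strategy in $\gfin(\mO,\mO)$ on $\beta G\setminus G$. For the converse I would argue as follows: suppose player 2 has a winning strategy in the Menger game on $\beta G\setminus G$. By Theorem \ref{more} (Telg\'arsky), $G$ is sieve complete. A sieve complete space is of countable type, hence in particular $G$ is of countable type; I then invoke the fact that a topological group which is of countable type — equivalently, feathered (has a compact set of countable character at the identity) — has a compact subgroup $K$ with $G/K$ metrizable, so again $G$ is paracompact. Now a paracompact sieve complete space is \v Cech-complete, so $G$ is \v Cech-complete, completing the equivalence.

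The main obstacle is really bookkeeping about which classical group-theoretic fact to cite and in what form: the cleanest route is to observe that a topological group is paracompact as soon as it is feathered/of countable type (this is a theorem in the Arhangel'skii--Tkachenko book on topological groups), and then note that both hypotheses in the statement force $G$ to be of countable type — \v Cech-completeness trivially, and ``player 2 wins the Menger game at infinity'' via Theorem \ref{more} together with the cited fact that sieve complete spaces are of countable type. Once paracompactness is secured in both directions, the corollary is just an application of Corollary \ref{cech}. I would therefore structure the proof as: (1) show the hypothesis on either side implies $G$ is of countable type; (2) cite that a topological group of countable type is paracompact; (3) apply Corollary \ref{cech}.
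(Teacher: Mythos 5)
Your proposal is correct and follows essentially the same route as the paper, whose entire proof is the observation that every topological group of countable type is paracompact (so that Corollary \ref{cech} applies, both hypotheses forcing countable type via \v Cech-completeness or via Telg\'arsky's theorem and sieve completeness). Your additional detour through the compact-subgroup/metrizable-quotient structure theorem is just the standard proof of that cited fact and does not change the argument.
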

\begin{proof} Every topological group of countable type is
paracompact. \end{proof}
\bibliographystyle{abbrv}

\def\cprime{$'$}

\end{document}